\newtheorem{thm}{Theorem}[section]
\newtheorem{cor}[thm]{Corollary}
\newtheorem{ques}[thm]{Question}
\newtheorem{ex}[thm]{Example}
\newtheorem{prop}[thm]{Proposition}
\theoremstyle{definition}
\theoremstyle{remark}
\newtheorem{rem}[thm]{Remark}
\numberwithin{equation}{section}
\begin{document}

\title[Sums of squares over totally real fields]
{Sums of squares over totally real \\ fields are rational sums of squares}%
\author{Christopher J. Hillar}%
\address{Department of Mathematics, Texas A\&M University, College Station, TX 77843}
\email{chillar@math.tamu.edu}
 \thanks{Supported under a National Science Foundation Postdoctoral Research Fellowship.}
  
\subjclass{12Y05, 12F10, 11E25, 13B24}%
\keywords{rational sum of squares, semidefinite programming, totally real number field}%

\begin{abstract}
Let $K$ be a totally real number field with Galois closure $L$.
We prove that if $f \in \mathbb Q[x_1,\ldots,x_n]$ is a sum of 
$m$ squares in $K[x_1,\ldots,x_n]$, then $f$ is a sum of 
\[4m \cdot 2^{[L: \mathbb Q]+1} {[L: \mathbb Q] +1 \choose 2}\] 
squares in $\mathbb Q[x_1,\ldots,x_n]$.  Moreover, our argument
is constructive and generalizes to the case of commutative $K$-algebras.
This result gives a partial resolution to a question of Sturmfels
on the algebraic degree of certain semidefinite programing problems.  
\end{abstract}
\maketitle

\section{Introduction}

In recent years, techniques from semidefinite programming have produced numerical algorithms 
for finding representations of positive semidefinite polynomials 
as sums of squares.  These algorithms have many applications in optimization, control theory,
quadratic programming, and matrix analysis \cite{PS,Parrilo, Parrilo2, SOSTOOLS, SOSTOOLS2}.  For a
noncommutative application of these techniques to a famous trace
conjecture, see the papers \cite{Burgdorf,Haegele, Schweighofer,LS} which continue on the work
of \cite{Hi}. 

One major drawback with these algorithms is that their output is, in general, numerical.
For many applications, however, exact polynomial identities are needed. 
In this regard, Sturmfels has asked whether a representation with real
coefficients implies one over the rationals.

\begin{ques}[Sturmfels]\label{sturmfelsques}
If $f \in \mathbb Q[x_1,\ldots,x_n]$ is a sum of squares in $\mathbb R[x_1,\ldots,x_n]$,
then is $f$ also a sum of squares in $\mathbb Q[x_1,\ldots,x_n]$?
\end{ques}

It is well-known that a polynomial is a sum of real polynomial squares if and only if it 
can be written in the form 
\begin{equation}\label{fBeq}
f = \textbf{v}^TB\textbf{v},
\end{equation}
in which $\textbf{v}$ is a column vector of monomials and $B$ is a real positive semidefinite
(square) matrix \cite{powers}; in this case, the matrix $B$ is called a \textit{Gram matrix} for $f$.  If $B$ happens to have rational entries, then $f$ is a sum of squares in $\mathbb Q[x_1,\ldots,x_n]$ (this follows from a  Cholesky factorization argument or from a matrix generalization of Lagrange's four square theorem \cite{HN}).  Thus, in the language of quadratic forms, Sturmfels is asking whether the existence of  a positive semidefinite Gram matrix for $f \in \mathbb Q[x_1,\ldots,x_n]$ over the reals implies that one exists over the rationals. 

Although a complete answer to Question \ref{sturmfelsques}  is not known, Parrilo and Peyrl have
written an implementation of SOSTOOLS in the algebra package Macaulay 2 that
attempts to find rational representations of polynomials that are sums of squares \cite{ParriloPeyrl}.
Their idea is to approximate a real Gram matrix $B$ with rational numbers and then project back to 
the linear space of solutions governed by equation (\ref{fBeq}).

The following result says that Question \ref{sturmfelsques} has a positive answer in
some ``generic" sense;  it also explains the difficulty of finding counterexamples.

\begin{thm}
Let $f \in \mathbb Q[x_1,\ldots,x_n]$.  If there is an invertible Gram matrix $B$ for $f$, 
then there is a Gram matrix for $f$ with rational entries.
\end{thm}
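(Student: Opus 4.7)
My plan is to exploit the fact that the set of Gram matrices for $f$ is an affine linear space defined over $\mathbb{Q}$, together with the openness of the positive definiteness condition. The hypothesis that some Gram matrix is invertible promotes positive semidefiniteness to strict positive definiteness, which is the key extra room needed to perturb into $\mathbb{Q}$.

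First I would describe the affine space $\A$ of symmetric matrices $B'$ with $f = \mathbf{v}^T B' \mathbf{v}$. Expanding $\mathbf{v}^T B' \mathbf{v}$ and equating coefficients of each monomial in $f$ gives a system of linear equations in the entries of $B'$; because $\mathbf{v}$ is a vector of monomials and $f \in \mathbb{Q}[x_1,\ldots,x_n]$, all the coefficients in this system lie in $\mathbb{Q}$. Hence $\A$ is an affine subspace of the space of symmetric matrices that is defined over $\mathbb{Q}$, and consequently its rational points $\A(\mathbb{Q})$ are dense in $\A(\mathbb{R})$.

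Next I would use the hypothesis. Any Gram matrix for a real sum of squares representation is positive semidefinite; since $B$ is moreover invertible, it is positive definite. Positive definiteness is an open condition on symmetric matrices (for example, because the minimum eigenvalue is a continuous function), so there is a real neighborhood $U$ of $B$ in $\A(\mathbb{R})$ consisting entirely of positive definite matrices.

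Finally, by the density statement above, $U \cap \A(\mathbb{Q})$ is nonempty, yielding a rational positive definite matrix $B' \in \A$. This $B'$ is a rational Gram matrix for $f$, which is the conclusion (and as the authors note, one may then invoke a Cholesky argument or the result of \cite{HN} to extract an actual rational sum of squares decomposition). I do not foresee a substantive obstacle: the only point deserving care is verifying that the defining equations of $\A$ have rational coefficients, and this is immediate from the form $f = \mathbf{v}^T B' \mathbf{v}$ with $\mathbf{v}$ a monomial vector and $f \in \mathbb{Q}[x_1,\ldots,x_n]$.
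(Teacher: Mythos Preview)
Your proposal is correct and follows essentially the same argument as the paper: the affine space of Gram matrices is defined over $\mathbb{Q}$, so its rational points are dense, and since the given Gram matrix is positive definite (an open condition), a nearby rational one exists. The only cosmetic difference is that the paper invokes the leading principal minors to witness openness of positive definiteness, whereas you cite continuity of the minimum eigenvalue.
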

\begin{proof}
Let $B$ be a real positive semidefinite matrix and $\textbf{v}$ a vector of monomials
such that  $f = \textbf{v}^TB\textbf{v}$.  Consider the set $L$ of real symmetric matrices 
$S = S^T = (s_{ij})$ of the same size as $B$ for which $f = \textbf{v}^TS\textbf{v}$.
This space corresponds to the solutions of a set of linear equations in the $s_{ij}$ over $\mathbb Q$.
From elementary linear algebra (Gaussian elimination), it follows that there is an integer
$k$  such that 
\[ L = \{S_0 +  t_1 S_1+ \cdots +t_kS_k: t_1,\ldots,t_k \in \mathbb R\}\]
for some rational symmetric matrices $S_1,\ldots,S_k$.
The subset of matrices in $L$ that are positive definite is determined by a finite set of strict 
polynomial inequalities in the $t_1,\ldots,t_k$ produced by
setting all the leading principal minors to be positive \cite[p. 404]{HJ1}.  
By continuity, a real positive definite solution $B$ guarantees a rational one,
and this completes the proof.
\end{proof}

\begin{rem}
The argument above shows that we may find a rational Gram matrix of the same size as the 
original Gram matrix $B$.  Is this true even if $B$ is not invertible?  We suspect not.
\end{rem}

Although the general case seems difficult, 
Question \ref{sturmfelsques} has a positive answer for univariate polynomials due to results of Landau 
\cite{landau}, Pourchet \cite{Pourchet}, and (algorithmically) Schweighofer \cite{Schweighofer2}.  
In fact, Pourchet has shown that at most $5$ polynomial squares
in $\mathbb Q[x]$ are needed to represent every positive semidefinite polynomial in $\mathbb Q[x]$, 
and this is best possible.

It follows from Artin's solution to Hilbert's $17$th problem \cite[Theorem 2.1.12]{PosPoly}
that if \mbox{$f \in \mathbb Q[x_1,\ldots,x_n]$} is a sum of 
squares of rational functions in $\mathbb R(x_1,\ldots,x_n)$, then it is a sum
of squares in $\mathbb Q(x_1,\ldots,x_n)$.  Moreover, from the work of 
Voevodsky on the Milnor conjectures, it is known that $2^{n+2}$ such squares suffice \cite[p. 530]{LAM}.  
However, the transition from rational functions to polynomials is often a very delicate one.  For instance,
not every polynomial that is a sum of squares of rational functions 
is a sum of squares of polynomials \cite[p. 398]{LAM}. 

More generally, Sturmfels is interested in the algebraic degree \cite{NRS} of 
maximizing a linear functional over the space of all sum of 
squares representations of a given polynomial that is a sum of squares.
In the special case of Question \ref{sturmfelsques}, a positive answer 
signifies an algebraic degree of $1$ for this optimization problem.

General theory (for instance, Tarski's Transfer Principle for real closed fields 
\cite[Theorem 2.1.10]{PosPoly}) reduces Question \ref{sturmfelsques} to one involving real algebraic numbers.  In this paper, we present a positive answer to 
this question for a special class of fields.

Recall that a \textit{totally real number field} is a finite algebraic extension of
$\mathbb Q$ all of whose complex embeddings lie entirely in $\mathbb R$.
For instance, the field $\mathbb Q(\sqrt{d})$ is totally real for positive, integral $d$.
Our main theorem is the following.

\begin{thm}\label{mainthm}
Let $K$ be a totally real number field with Galois closure $L$.
 If $f \in \mathbb Q[x_1,\ldots,x_n]$ is a sum of 
$m$ squares in $K[x_1,\ldots,x_n]$, then $f$ is a sum of 
\[4m \cdot 2^{[L: \mathbb Q]+1} {[L: \mathbb Q] +1 \choose 2}\] 
squares in $\mathbb Q[x_1,\ldots,x_n]$.
\end{thm}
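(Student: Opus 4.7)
The plan is to Galois-average the given representation, converting $f$ into a sum of squares in $L[x_1,\ldots,x_n]$ whose structure is controlled by the trace form of $L/\mathbb Q$, and then to descend to $\mathbb Q[x_1,\ldots,x_n]$ by decomposing that trace form as a rational sum of squares. Total reality of $L$ is exactly what makes the trace form positive definite, and positivity is the engine of the descent.

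First I would set $G = \mathrm{Gal}(L/\mathbb Q)$ and $d = [L:\mathbb Q] = |G|$, viewing the given $g_i \in K[x_1,\ldots,x_n]$ inside $L[x_1,\ldots,x_n]$ via $K \subseteq L$. Since $f$ is fixed by $G$, applying each $\sigma \in G$ to $f = \sum_{i=1}^m g_i^2$ and summing gives the Galois-averaged identity $df = \sum_{\sigma \in G}\sum_{i=1}^m \sigma(g_i)^2$. Next, I would fix a $\mathbb Q$-basis $\omega_1,\ldots,\omega_d$ of $L$ and expand $g_i = \sum_j a_{ij}\omega_j$ with $a_{ij} \in \mathbb Q[x_1,\ldots,x_n]$; collapsing the $\sigma$-sum converts each $\sigma(\omega_j\omega_k)$ into $\mathrm{Tr}_{L/\mathbb Q}(\omega_j\omega_k) \in \mathbb Q$, giving $df = \sum_{i=1}^m A_i^{T} T A_i$, where $A_i = (a_{ij})_j$ and $T = (\mathrm{Tr}_{L/\mathbb Q}(\omega_j\omega_k))_{j,k}$ is the Gram matrix of the trace form.

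The second main step is to decompose $T$ as a sum of rank-one rational matrices $\sum_\ell u_\ell u_\ell^{T}$ with $u_\ell \in \mathbb Q^d$. Because $L$ is totally real, $\mathrm{Tr}(\alpha^2) = \sum_\sigma \sigma(\alpha)^2 > 0$ for $\alpha \neq 0$, so $T$ is a positive definite rational symmetric matrix. Rational Gauss congruence diagonalizes $T$ with positive rational diagonal entries, and Lagrange's four-square theorem applied to each such entry (over $\mathbb Q$) expresses it as a sum of four rational squares, producing the desired rank-one decomposition of $T$. Substituting gives $df = \sum_{i,\ell}(u_\ell^{T}A_i)^2$; and since $1/d \in \mathbb Q_{>0}$ is itself a sum of four rational squares by Lagrange once more, the identity $f = (1/d)(df)$ exhibits $f$ as a rational sum of squares.

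The main obstacle will be the bookkeeping needed to match the quantitative bound in the statement. The Galois averaging and basis expansion are essentially formal, and the rational decomposition of $T$ is standard; but I expect the factor $\binom{[L:\mathbb Q]+1}{2}$ to arise from the independent entries of the symmetric $d\times d$ matrix $T$, the factor $2^{[L:\mathbb Q]+1}$ from a Pfister-type multiplicativity used to recombine squares (writing $T$ as rank-one rational matrices indexed by all pairs of basis elements with signs), and the $4m$ from the four-square decomposition of $1/d$ together with the $m$ terms of the input. Converting the sketch above into the precise bound $4m \cdot 2^{[L:\mathbb Q]+1}\binom{[L:\mathbb Q]+1}{2}$ should then be a matter of careful counting in these decompositions rather than a new idea.
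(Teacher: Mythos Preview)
Your argument is correct and is in fact simpler than the paper's. Both proofs begin with the same Galois-averaging step, reducing to the trace-form identity $\sum_{\sigma}(\sigma p)^2 = q^{T}Tq$ with $T=(\mathrm{Tr}_{L/\mathbb Q}(\omega_j\omega_k))$ a positive definite rational symmetric matrix. You then diagonalize $T$ over $\mathbb Q$ by congruence and break each positive rational pivot into four rational squares, which immediately writes $q^{T}Tq$ as a rational sum of at most $4[L:\mathbb Q]$ squares in $\mathbb Q[x_1,\ldots,x_n]$. The paper instead factors $T=V_rV_r^{T}$ as a Vandermonde product, invokes Fiedler's theorem to realize the conjugates of $\theta$ as eigenvalues of a symmetric matrix over $\mathbb Q(\sqrt{l_1},\ldots,\sqrt{l_r})$, and uses this to rewrite $T=CC^{T}$ with $C$ an $r\times\binom{r+1}{2}$ matrix over $\mathbb Q(\sqrt{l_1},\ldots,\sqrt{l_r},\sqrt{2})$; it then descends through this tower of $r+1$ quadratic extensions, each step doubling the number of squares. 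That is where the factors $\binom{[L:\mathbb Q]+1}{2}$ and $2^{[L:\mathbb Q]+1}$ actually originate: the first is the number of columns of $C$, the second is the cost of the quadratic descent.

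The upshot is that your final paragraph of speculation about how those factors would emerge from your method is off target: they do not arise from your decomposition at all. Your route yields a bound of order $m\cdot[L:\mathbb Q]$ (roughly $16m[L:\mathbb Q]$ after the final $1/d$ absorption), which is dramatically better than the paper's $4m\cdot 2^{[L:\mathbb Q]+1}\binom{[L:\mathbb Q]+1}{2}$. So you have proved a stronger statement than the one asked for, by a shorter argument; you should simply state and prove your own bound rather than try to force the exponential one.
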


Our techniques also generalize naturally to the following situation.
Let $R$ be a commutative $\mathbb Q$-algebra and let $K$ be a totally real 
number field.  Also, set $S := R \otimes_{\mathbb Q} K$, which we naturally
identify as a ring extension of $R = R \otimes_{\mathbb Q} \mathbb Q$.
If $f$ is a sum of the form
\begin{equation*}\label{ratsos}
f = \sum_{i=1}^m p_i^2, \ \ \ p_i \in  R,
\end{equation*}
then we say that $f$ is a \textit{sum of squares} over $R$.  
It is a difficult problem to determine those $f$ which are sums of squares over $R$.  
In this setting, Theorem \ref{mainthm} generalizes in the following way.

\begin{thm}\label{mainthm2}
Let $K$ be a totally real number field with Galois closure $L$.
If $f \in R$ is a sum of 
$m$ squares in $R \otimes_{\mathbb Q} K$, then $f$ is a sum of 
$4m \cdot 2^{[L: \mathbb Q]+1} {[L: \mathbb Q] +1 \choose 2}$ squares over $R$.
\end{thm}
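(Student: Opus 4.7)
\emph{Approach.} My plan is to Galois-average the given sum of squares across $G := \mathrm{Gal}(L/\mathbb{Q})$ and then exploit positive-definiteness of the trace form $\mathrm{Tr}_{L/\mathbb{Q}}$---a consequence of the total reality of $L$.

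\emph{Galois average.} The inclusion $K \hookrightarrow L$ yields $R \otimes_{\mathbb{Q}} K \hookrightarrow R \otimes_{\mathbb{Q}} L =: S_L$, so we may view $f = \sum_{i=1}^{m} p_i^2$ with $p_i \in S_L$. The group $G$ acts on $S_L$ through the $L$-factor and fixes exactly $R$, so applying each $\sigma \in G$ to the identity and summing gives
$$d \cdot f \;=\; \sum_{i=1}^{m} T(p_i), \qquad T(p) \;:=\; \sum_{\sigma \in G} \sigma(p)^2, \qquad d := [L:\mathbb{Q}],$$
where each $T(p_i)$ is $G$-invariant and therefore lies in $R$.

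\emph{Trace form.} Fix a $\mathbb{Q}$-basis $e_0,\ldots,e_{d-1}$ of $L$ and expand $p_i = \sum_j r_{ij} e_j$ with $r_{ij} \in R$. A direct computation gives
$$T(p_i) \;=\; \sum_{j,k} \mathrm{Tr}_{L/\mathbb{Q}}(e_j e_k)\, r_{ij} r_{ik},$$
i.e.\ the trace form of $L/\mathbb{Q}$ evaluated at $(r_{i0},\ldots,r_{i,d-1})$. Since $L$ is totally real, this rational $d$-ary quadratic form is positive definite. A rational congruence-diagonalization writes it as $\sum_j c_j\, \ell_j(\mathbf{x})^2$ with $c_j \in \mathbb{Q}_{>0}$ and $\ell_j$ rational linear forms; applying Lagrange's four-square theorem to each $c_j$ expresses the trace form---and hence each $T(p_i)$ after substitution of the $r_{ij}$---as a sum of at most $N_d$ squares over $R$. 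Summing in $i$ presents $df$ as a sum of $m N_d$ squares over $R$. To remove the factor $d$, Lagrange applied to $1/d \in \mathbb{Q}_{>0}$ gives $1/d = \sum_{\alpha=1}^{4} b_\alpha^2$ with $b_\alpha \in \mathbb{Q}$, and the identity
$$\Bigl(\sum_{\alpha} b_\alpha^2\Bigr)\Bigl(\sum_{s} q_s^2\Bigr) \;=\; \sum_{\alpha,s}(b_\alpha q_s)^2$$
in the commutative ring $R$ converts $f = d^{-1}(df)$ into a sum of $4 m N_d$ squares over $R$.

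\emph{Main obstacle.} The conceptual ingredients---Galois averaging, total reality $\Rightarrow$ positive-definite trace form, and Lagrange's four-square theorem---are essentially forced. The delicate part is the combinatorial bookkeeping required to hit the announced constant $N_d \le 2^{d+1}\binom{d+1}{2}$: the naive diagonalize-then-four-square approach already yields $N_d \le 4d$, which is actually sharper for $d \ge 2$, so matching the stated bound would require a less aggressive, more structured representation of the trace form (for instance, one square per unordered pair of basis indices, split into sign combinations before clearing denominators via Lagrange). That accounting, not any new algebraic idea, is the only real obstacle---the rest of the argument is largely forced by the totally real hypothesis and the use of $\mathrm{Tr}_{L/\mathbb{Q}}$.
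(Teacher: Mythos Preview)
Your argument is correct, but it is \emph{not} the route the paper takes. The paper does begin with the same Galois average, obtaining $f=\frac{1}{|G|}\sum_i T(p_i)$, but it then handles each trace form $T(p)=\sum_{\sigma}(\sigma p)^2$ by an explicit matrix factorization: writing $T(p)=q^T V_r V_r^T q$ with $V_r$ the Vandermonde in the conjugates of a primitive element, it replaces $V_rV_r^T$ by $CC^T$ where $C$ is built from powers of a symmetric matrix $A$ having the right characteristic polynomial. Fiedler's construction supplies such an $A$ with entries in $\mathbb{Q}(\sqrt{l_1},\dots,\sqrt{l_r})$ for positive integers $l_k$, so $C$ lives over $\mathbb{Q}(\sqrt{l_1},\dots,\sqrt{l_r},\sqrt{2})$ and has $\binom{r+1}{2}$ columns. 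A final descent through the tower of quadratic extensions (each step doubling the number of squares via $(a+b\sqrt l)^2+(a-b\sqrt l)^2=2a^2+2lb^2$) and Lagrange's theorem produce the stated $4\cdot 2^{d+1}\binom{d+1}{2}$.

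Your direct diagonalization of the rational, positive-definite trace form is both simpler and quantitatively stronger: it gives $T(p)$ as at most $4d$ squares over $R$, and hence $f$ as at most $4md$ squares (you can even drop one factor of $4$ by folding the $1/d$ into the diagonal coefficients before invoking Lagrange). So the ``obstacle'' you identify is illusory---the announced constant is an artifact of the paper's Vandermonde/Fiedler machinery, not a target you need to hit; indeed the paper itself remarks that the factor is surely far from optimal. What the paper's route buys is a uniform framework that also says something when $K$ is not totally real (it yields a difference of two sums of squares over $\mathbb{Q}$), whereas your argument leans squarely on positive definiteness of $\mathrm{Tr}_{L/\mathbb{Q}}$ and thus on total reality.
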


\begin{rem}
One can view Theorem \ref{mainthm2}
as a ``going-down" theorem \cite{EL} for certain quadratic forms over the rings $R$ and 
$R \otimes_{\mathbb Q} K$.  We do not know how much the factor
$2^{[L: \mathbb Q]+1} {[L: \mathbb Q] +1 \choose 2}$ can be improved upon, although
we suspect that for polynomial rings, it can be improved substantially.
We remark that it is known \cite{CDLR} that 
arbitrarily large numbers of squares are necessary to represent
any sum of squares over $\mathbb R[x_1,\ldots,x_n]$, $n > 1$, making a fixed
bound (for a given $n$) as in the rational function case impossible.
\end{rem}

Our proof of Theorem \ref{mainthm} is also constructive.


\begin{ex}\label{cubeex}
Consider the polynomial \[f = 3- 12y - 6x^3+18y^2+3x^6+12x^3y-6xy^3+6x^2y^4.\]
This polynomial is a sum of squares over $\mathbb R[x,y]$.  To see this,
let $\alpha,\beta,\gamma \in \mathbb R$ be the roots of the polynomial 
$u(x) = x^3-3x+1$.  Then, a computation reveals that
\[f = (x^3+\alpha^2 y + \beta xy^2-1)^2+(x^3+\beta^2 y + \gamma xy^2-1)^2+(x^3+\gamma^2 y + \alpha xy^2-1)^2.\]
Using our techniques, we can construct from this representation one over $\mathbb Q$:
\begin{equation*}
\begin{split}
\left( x^3+xy^2+3y/2 - 1 \right) ^{2}+ & \left( x^3+2y - 1 \right) ^{2}+ \left( x^3-xy^2+5y/2 - 1 \right) ^{2} \\
& + \left( 2y - xy^2 \right) ^{2}+3 y^2/2+3x^2y^4.\\
\end{split}
\end{equation*}
This example will be revisited many times in the sequel to illustrate our proof.  
\qed
\end{ex}

We believe that in Theorem \ref{mainthm} the field 
$K$ may be replaced by any real algebraic extension of the rationals
(thus giving a positive answer to Sturmfels' question); 
however, our techniques do not readily generalize to this situation.
We shall discuss the obstructions throughout our presentation.


The organization of this paper is as follows.
In Section \ref{prelim}, we set up our notation and state a weaker (but still sufficient) version of 
our main theorem.  Section \ref{vandfact} describes a matrix factorization
for Vandermonde matrices.  This construction is applied in the subsequent section to reduce the
problem to the case $K = \mathbb Q(\sqrt{l_1},\ldots,\sqrt{l_r})$ for positive
integers $l_k$.  Finally, the proof of Theorem \ref{mainthm} is completed in Section \ref{proof}.
For simplicity of exposition, we shall focus on the polynomial version of our main theorem
although it is an easy matter to translate the techniques to prove the more general 
Theorem \ref{mainthm2}.

We would like to thank T. Y. Lam, Bruce Reznick, and Bernd Sturmfels for interesting
discussions about this problem.  We also thank the anonymous referee for 
several suggestions that improved the exposition of this work.

\section{Preliminaries}\label{prelim}

An equivalent definition of a totally real number field $K$ is that it is a field generated by 
a root of an irreducible polynomial $u(x) \in \mathbb Q[x]$, all of whose zeroes are real.
For instance, the field $K = \mathbb Q(\alpha,\beta,\gamma) = \mathbb Q(\alpha)$ arising in 
Example \ref{cubeex} is a totally real (Galois) extension of $\mathbb Q$ in this sense.
A splitting field of $u(x)$ (a Galois closure of $K$) is also totally real, so we 
lose no generality in assuming  that $K$ is a totally real Galois extension of $\mathbb Q$.
We will therefore assume from now on that  $K = \mathbb Q(\theta)$ is Galois and that
$\theta$ is a real algebraic number, all of whose conjugates are also real.  
We set $r = [K: \mathbb Q]$ and let $G$ be the Galois group Gal$(K/\mathbb Q)$.
For the rest of our discussion, we will fix $K =  \mathbb Q(\theta)$ with these parameters.

We begin by stating a weaker formulation of Theorem \ref{mainthm}.  For the purposes of 
this work, a \textit{rational sum of squares} is a linear combination of squares
with positive rational coefficients.

\begin{thm}\label{weakmainthm}
Let $K$ be a totally real number field that is Galois over $\mathbb Q$.
Then for any $p \in K[x_1,\ldots,x_n]$, the polynomial
\[ f = \sum_{\sigma \in G} (\sigma p)^2\]
can be written as a rational sum of 
$2^{[K: \mathbb Q]+1} {[K: \mathbb Q] +1 \choose 2}$ squares in $\mathbb Q [x_1,\ldots,x_n]$.
\end{thm}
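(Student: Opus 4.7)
The plan is to encode $f$ as a rational positive-definite quadratic form in the rational coefficients of $p$ and then diagonalize that form over $\mathbb{Q}$. Expand $p = \sum_{i=0}^{r-1} q_i\theta^i$ with $q_i \in \mathbb{Q}[x_1,\ldots,x_n]$, so that for each $\sigma \in G$ we have $\sigma p = \sum_i q_i\,\sigma(\theta)^i$. Stacking these over $G$ produces $V\mathbf{q}$, where $V$ is the Vandermonde matrix in the Galois conjugates $\theta_k := \sigma_k\theta$ with $V_{ki} = \theta_k^i$, and consequently
\[
f = \sum_{\sigma \in G} (\sigma p)^2 = \mathbf{q}^T(V^T V)\mathbf{q} = \mathbf{q}^T M \mathbf{q},
\qquad
M_{ij} = \sum_{k=1}^r \theta_k^{i+j} = \mathrm{Tr}_{K/\mathbb{Q}}(\theta^{i+j}).
\]
The key point is that each $M_{ij}$ is $G$-invariant, hence rational; because the $\theta_k$ are distinct, $V$ is invertible and $M$ is positive definite. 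The problem is thereby reduced to the purely rational task of writing the form $\mathbf{q}^T M \mathbf{q}$ as a rational sum of squares in the polynomials $q_i$.

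For that I would run the standard LDL (equivalently, Gram--Schmidt) factorization of $M$ over $\mathbb{Q}$: since $M$ is a rational positive definite matrix, its leading principal minors are strictly positive rationals, so no permutation is required and one obtains $M = L D L^T$ with $L$ unit lower triangular in $\mathbb{Q}^{r \times r}$ and $D = \mathrm{diag}(d_1,\ldots,d_r)$ having each $d_j \in \mathbb{Q}_{>0}$. Setting $h_j := (L^T\mathbf{q})_j \in \mathbb{Q}[x_1,\ldots,x_n]$ yields
\[
f = \sum_{j=1}^r d_j\, h_j^2,
\]
a rational sum of $r$ squares, and $r \leq 2^{r+1}\binom{r+1}{2}$ gives the stated bound with considerable slack.

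The main thing to check is that the LDL recursion stays inside $\mathbb{Q}$, which is standard: the successive Schur complements of a rational positive definite matrix are again rational and positive definite, so each pivot is a strictly positive rational. The bound in the statement is very loose from this viewpoint, and I suspect the authors' specific factor $2^{r+1}\binom{r+1}{2}$ arises because they avoid the LDL algorithm in favour of the more explicit (and considerably more wasteful) decomposition coming from the Vandermonde factorization announced in their outline; the advantage of that route is presumably that it interfaces cleanly with the subsequent reduction to the multi-quadratic case $K = \mathbb{Q}(\sqrt{l_1},\ldots,\sqrt{l_r})$.
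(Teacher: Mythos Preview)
Your argument is correct and is genuinely different from, and considerably sharper than, the paper's own proof. Both routes begin identically: with $p=\sum_i q_i\theta^i$ one has $f=\mathbf q^T M\mathbf q$ where $M_{ij}=\sum_k\theta_k^{i+j}=\mathrm{Tr}_{K/\mathbb Q}(\theta^{i+j})$ is the rational, positive definite trace-form matrix (positive definiteness being exactly where the totally real hypothesis enters). At this point you simply run the rational $LDL^T$ factorization of $M$ and obtain $f$ as a rational sum of $r=[K:\mathbb Q]$ squares, which already dominates the stated bound. The paper instead realises $M=CC^T$ where $C$ is built from the powers $I,A,\dots,A^{r-1}$ of a symmetric matrix $A$ with the prescribed characteristic polynomial, produced via Fiedler's theorem; this $C$ has $\binom{r+1}{2}$ columns and entries in an auxiliary multiquadratic field $\mathbb Q(\sqrt{l_1},\dots,\sqrt{l_r},\sqrt 2)$, and a further descent through those $r{+}1$ square roots contributes the factor $2^{r+1}$. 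Your direct diagonalisation bypasses both the Fiedler construction and the multiquadratic descent, is equally constructive, and yields the much stronger count of $r$ squares (hence $4mr$ genuine squares in the application to Theorem~\ref{mainthm}). The one thing the paper's detour buys is a uniform statement in the non--totally-real case (their Corollary~4.10), though even there rational congruence diagonalisation of the indefinite $M$ would give the same ``difference of two sums of squares'' conclusion. Your closing speculation is slightly off: the multiquadratic field in the paper is not the original $K$ but an auxiliary field manufactured by Fiedler's theorem, and the reduction to it is the proof of this theorem rather than a subsequent step.
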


\begin{rem} 
Elements of the form $\sum_{\sigma \in G} (\sigma p)^2$
are also sometimes called \textit{trace forms} for the field extension $K$ \cite[p. 217]{LAM}.

\end{rem}


It is elementary, but important that this result implies Theorem \ref{mainthm}.

\begin{proof}[Proof of Theorem \ref{mainthm}]
Let $f = \sum_{i=1}^m p_i^2 \in \mathbb Q [x_1,\ldots,x_n]$ be 
a sum of squares with each $p_i \in K[x_1,\ldots,x_n]$. 
Summing both sides of this equation over all actions of \mbox{$G = \text{Gal}(K/\mathbb Q)$}, we have
\[ f = \frac{1}{|G|} \sum_{i=1}^m \sum_{\sigma \in G} (\sigma p_i)^2.\]
The conclusions of Theorem \ref{mainthm}
now follow immediately from Theorem \ref{weakmainthm} and Lagrange's four
square theorem (every positive rational number is the sum of at most four squares).
\end{proof}

\begin{rem}
This averaging argument can also be found in the papers \cite{CDLR2,GP}.
\end{rem}


We will focus our remaining efforts, therefore, on proving Theorem \ref{weakmainthm}.

\section{Vandermonde Factorizations}\label{vandfact}

To prepare for the proof of Theorem \ref{weakmainthm}, we 
describe a useful matrix factorization. 
It is inspired by Ilyusheckin's recent proof \cite{NVI} that the 
discriminant of a symmetric matrix of indeterminates is a sum of squares, although
it would not surprise us if the factorization was known much earlier.

Let $A = A^T$ be an $r \times r$ symmetric matrix over a field $F$ of characteristic 
not equal to $2$, and let
$y_1,\ldots,y_r$ be the eigenvalues of $A$ in an algebraic closure of $F$.
Also, let $V_r$ be the Vandermonde matrix
\[ V_r = \left[\begin{array}{cccc}1 & 1 & \cdots & 1 \\y_1 & y_2 & \cdots & y_r \\\vdots & \vdots & \ddots & \vdots \\y_1^{r-1} & y_2^{r-1} & \cdots & y_r^{r-1}\end{array}\right].\]
The matrix $B = V_rV_r^T$ has as its $(i,j)$th entry the $(i+j-2)$th Newton power sum of
the eigenvalues of $A$: \[ \sum_{k=1}^r y_k^{i+j-2}.\] 
Since the trace of $A^m$ is also the $m$th Newton power sum of the $y_k$, it follows that
we may write $B = [\text{tr}(A^{i+j-2})]_{i,j=1}^{r} \in F^{r \times r}$.  We next give another factorization 
of $B$ in the form $CC^T$.

Let $E_{ij}$ be the $r \times r$ matrix with a $1$ in the $(i,j)$ entry and 
$0$'s elsewhere.  A basis for $r \times r$ symmetric matrices is then given by the 
following ${r+1 \choose 2}$ matrices:
\[\{E_{ii}: i = 1,\ldots,r\} \ \cup \  \{(E_{ij} + E_{ji})/\sqrt{2}: 1 \leq i < j \leq r\}.\]
For example, the ``generic" symmetric $2 \times 2$ matrix
\begin{equation}\label{2by2A}
A = \left[\begin{array}{cc}x_{11} & x_{12} \\x_{12} & x_{22} \end{array}\right],
\end{equation}
with entries in the field $F = \mathbb Q(x_{11},x_{12},x_{22})$,
is represented in this basis as
\[x_{11}\left[\begin{array}{cc}1 & 0 \\0 & 0\end{array}\right]
+ x_{22} \left[\begin{array}{cc} 0 & 0 \\0 & 1\end{array}\right]
+ \sqrt{2} \cdot  x_{12} \left[\begin{array}{cc}0 & 1/\sqrt{2} \\1/\sqrt{2} & 0 \end{array}\right].\]
This basis is useful since the inner product of two symmetric matrices $P$ and $Q$
with respect to this basis is simply tr$(PQ)$, as one can easily check.

Express the powers $A^m$ in terms of this basis and place 
the vectors of the coefficients as rows of a matrix $C$.  The entries of the $r \times {r+1 \choose 2}$
matrix $C$ will be in $F[\sqrt{2}]$.
Our construction proves the formal identity
\begin{equation}\label{matrixfactor} 
V_rV_r^T = [\text{tr}(A^{i+j-2})]_{i,j=1}^r = CC^T.
\end{equation}

\begin{ex}  
With $A$ given by (\ref{2by2A}), the factorization reads:
\begin{equation*}\label{C}
 \left[\begin{array}{cc}1 & 1 \\y_1 & y_2\end{array}\right]\left[\begin{array}{cc}1 & y_1 \\ 1 & y_2\end{array}\right]
 = \left[\begin{array}{ccc}1 & 1 & 0 \\x_{11} & x_{22} & \sqrt{2}\cdot x_{12}\end{array}\right]\left[\begin{array}{cc}1 & x_{11} \\1 & x_{22} \\0 & \sqrt{2}\cdot x_{12}\end{array}\right].
\end{equation*}
Algebraically, this equation reflects the fact that for a $2 \times 2$ symmetric matrix $A$, 
\begin{equation*}
\begin{split}
\text{\rm tr}(A) = \ & x_{11}+ x_{22},\\
\text{\rm tr}(A^2) = \ & \text{\rm tr}(A)^2 - 2\det(A) = x_{11}^2+x_{22}^2 + 2x_{12}^2. \\
\end{split}
\end{equation*}\qed
\end{ex}

In the next section, we will use the matrix factorization (\ref{matrixfactor}) to replace a Gram
matrix over $\mathbb Q(y_1,\ldots,y_r)$ with one over a much smaller field.  

\section{Symmetric matrices with prescribed characteristic polynomial}

Let $K = \mathbb Q(\theta)$ be totally real and Galois, and set 
$\sigma_1,\ldots,\sigma_r$ to be the elements of Gal$(K/\mathbb Q)$.
Given $p \in K[x_1,\ldots,x_n]$, we may express it in the form \[p = \sum_{i=0}^{r-1} q_{i} \theta^i,\]
for elements $q_i \in \mathbb Q [x_1,\ldots,x_n]$.  With this parameterization, the sum 
\[ \sum_{\sigma \in G} (\sigma p)^2 = 
\sum_{j=1}^{r} \left(\sum_{i=0}^{r-1} q_{i} (\sigma_j \theta)^i \right)^2\]
appearing in the statement of Theorem \ref{weakmainthm} may be written
succinctly as
\begin{equation}\label{vandtheta}  
\left[\begin{array}{c}q_0 \\\vdots \\q_{r-1}\end{array}\right]^T  \left[\begin{array}{cccc}1 &  \cdots & 1 \\ \sigma_1 \theta &  \cdots & \sigma_r \theta \\\vdots & \ddots  & \vdots \\ (\sigma_1\theta)^{r-1} &  \cdots & (\sigma_{r} \theta)^{r-1}\end{array}\right]
\left[\begin{array}{cccc}1 & \sigma_1 \theta & \hdots & (\sigma_1 \theta)^{r-1} \\\vdots & \vdots & \ddots & \vdots  \\1 & \sigma_r \theta & \hdots & (\sigma_r \theta)^{r-1}\end{array}\right]
\left[\begin{array}{c}q_0 \\\vdots \\q_{r-1}\end{array}\right].
\end{equation}

Let $V_r$ be the Vandermonde matrix appearing in equation (\ref{vandtheta}).
We would like to construct a factorization as in (\ref{matrixfactor}) to replace the 
elements of $K$ with numbers from $\mathbb Q(\sqrt{2})$.
To apply the techniques of Section \ref{vandfact}, however, we must 
find an $r \times r$ \textit{symmetric} matrix $A$
whose eigenvalues are $\sigma_1 \theta,\ldots,\sigma_r \theta$
(the roots of the minimal polynomial for $\theta$ over $\mathbb Q$).  A necessary 
condition is that these numbers are all real, but we would like a converse.
Unfortunately, a converse with matrices over $\mathbb Q$ is impossible.  
For the interested reader, we include a proof of this basic fact.

\begin{prop}
There is no rational, symmetric matrix with characteristic polynomial $u(x) = x^2-3$.
\end{prop}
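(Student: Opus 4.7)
The plan is to translate the constraints into the Diophantine statement ``$3$ is not a sum of two rational squares'' and then dispatch that statement by a descent argument. Since the characteristic polynomial has degree two, any realizing matrix $A$ must itself be $2 \times 2$. Writing
\[A = \begin{pmatrix} a & b \\ b & c \end{pmatrix}, \quad a, b, c \in \mathbb Q,\]
the trace and determinant conditions $\text{tr}(A) = 0$ and $\det(A) = -3$ immediately yield $c = -a$ and $a^2 + b^2 = 3$. So the whole problem reduces to showing this last equation has no rational solutions.

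For that step, I would clear denominators: choose integers $p, q, n$ with $\gcd(p, q, n) = 1$ such that $a = p/n$ and $b = q/n$, obtaining
\[p^2 + q^2 = 3 n^2.\]
Reduction modulo $3$ is then decisive. Since squares are congruent to $0$ or $1$ modulo $3$, the relation $p^2 + q^2 \equiv 0 \pmod 3$ forces $3 \mid p$ and $3 \mid q$. Substituting $p = 3 p'$ and $q = 3 q'$ gives $3 (p'^2 + q'^2) = n^2$, so $3 \mid n$ as well, contradicting primitivity.

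There is no real obstacle here --- the argument is a textbook instance of Fermat's two-square theorem, with $3$ playing the role of a prime congruent to $3 \pmod 4$. The point of recording the proposition at all seems to be to motivate the enlargement of the base field in Section~\ref{vandfact}: even for so simple a characteristic polynomial as $x^2 - 3$, a symmetric realization over $\mathbb Q$ is impossible, so the matrix factorization developed there must be allowed to use entries from an auxiliary extension such as $\mathbb Q(\sqrt 2)$.
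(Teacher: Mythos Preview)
Your proof is correct and follows the paper's approach almost exactly: reduce to $a^2+b^2=3$ via the trace and determinant, clear denominators, and rule out the resulting Diophantine equation. The only difference is cosmetic --- you carry out the descent modulo $3$ directly, whereas the paper invokes the classical characterization of integers representable as a sum of two squares; your version is more self-contained but proves the same thing.
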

\begin{proof}
We argue by way of contradiction. Let  \[A = \left[\begin{array}{cc}a & b \\ b & c\end{array}\right],  \ \ a,b,c \in \mathbb Q,\] and suppose that
\[u(x) = \det(xI-A) = x^2-(a+c)x + (ac-b^2).\]   It follows that there are rational numbers
$a$ and $b$ such that $a^2 + b^2 = 3$.  Multiplying by a common denominator,
one finds that there must be integer solutions $u,v,w$ to the diophantine equation
\begin{equation}\label{diopheq} 
u^2 + v^2 = 3w^2.
\end{equation}
Recall from elementary number theory that a number $n$ is the 
sum of two integral squares if and only if every prime $p \equiv 3  \ (\text{\rm mod } 4)$ that appears in 
the prime factorization of $n$ appears  to an even power.  This contradiction
finishes the proof.
\end{proof}


If we allow $A$ to contain square roots of rational numbers, however, then
there is always such a symmetric $A$.  This is the content of a result of
Fiedler \cite{Fiedler}.  We include his proof for completeness.

\begin{thm}[Fiedler]\label{symcomp}
Let $u(x) \in \mathbb C[x]$ be monic of degree $r$ and let $b_1,\ldots,b_r$ be distinct 
complex numbers such that $u(b_k) \neq 0$ for each $k$.  Set $v(x) = \prod_{k=1}^r(x-b_k)$ and
choose any complex numbers $d_1,\ldots,d_r$ and $\delta$ that satisfy \[ \delta v'(b_k)d_k^2 - u(b_k) = 0, \ \ k = 1,\ldots,r.\]  Let $d = [d_1,\ldots,d_r]^T$ and $B = \text{\rm diag}(b_1,\ldots,b_r)$.  Then the symmetric 
matrix \[ A = B - \delta dd^T\] has characteristic polynomial equal to $u(x)$. 
\end{thm}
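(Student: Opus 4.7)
The plan is to compute the characteristic polynomial of $A = B - \delta dd^T$ by exploiting its structure as a rank-one perturbation of the diagonal matrix $B$, and to show the resulting polynomial equals $u(x)$ by matching values at the $r$ distinct scalars $b_1,\ldots,b_r$.

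First I would apply the matrix determinant lemma: for $x$ distinct from every $b_k$, the matrix $xI - B$ is invertible, and one obtains
\[
\det(xI - A) \;=\; \det(xI - B)\bigl(1 + \delta\, d^T(xI - B)^{-1}d\bigr) \;=\; v(x) + \delta\sum_{k=1}^r d_k^2\,\frac{v(x)}{x - b_k}.
\]
Since each quotient $v(x)/(x - b_k)$ equals the polynomial $v_k(x) := \prod_{j \ne k}(x - b_j)$, both sides are polynomials in $x$ that agree on a cofinite set, hence the identity extends to all $x$. In particular $\det(xI - A)$ is monic of degree $r$ (the sum $\sum d_k^2 v_k(x)$ contributes only terms of degree at most $r-1$), matching the degree and leading term of $u(x)$.

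Next I would evaluate both sides at the $r$ distinct points $b_1,\ldots,b_r$. At $x = b_k$, the summands indexed by $j \ne k$ vanish because $v_j(b_k) = 0$, while $v_k(b_k) = v'(b_k)$ follows from differentiating $v(x) = (x - b_k)v_k(x)$ at $x = b_k$. Therefore
\[
\det(b_k I - A) \;=\; \delta\, d_k^2\, v'(b_k) \;=\; u(b_k),
\]
the last equality being precisely the defining hypothesis on $(d_k, \delta)$. Two monic polynomials of degree $r$ that agree at $r$ distinct points must coincide, so $\det(xI - A) = u(x)$, as required.

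The only subtlety worth flagging is the use of the matrix determinant lemma at points where $xI - B$ is invertible; this is not really an obstacle because both sides of the derived identity are polynomial expressions in $x$, so their agreement off a finite set forces agreement everywhere. Beyond this, the argument is essentially a one-line calculation once one recognizes that the prescribed relation $\delta v'(b_k) d_k^2 = u(b_k)$ is exactly the interpolation data required to pin down the characteristic polynomial at the eigenvalues $b_k$ of the unperturbed matrix $B$.
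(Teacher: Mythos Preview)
Your proof is correct and follows essentially the same route as the paper: apply the rank-one determinant update formula (the paper cites it as Sherman--Morrison, you call it the matrix determinant lemma) to express $\det(xI-A)$ as $v(x)+\delta\sum_k d_k^2\prod_{j\neq k}(x-b_j)$, then match the two monic degree-$r$ polynomials at the $r$ points $b_1,\ldots,b_r$. Your write-up is in fact a bit more careful than the paper's in spelling out why $v_k(b_k)=v'(b_k)$ and why the identity extends across the singular values of $xI-B$, but the argument is the same.
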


\begin{proof}
Applying the Sherman-Morrison formula \cite[p. 50]{SM} for the determinant of a rank $1$ perturbation 
of a matrix, we have 
\begin{equation}
\begin{split}
\det(xI-A) = \ & \det(xI-B) + \delta \det(xI-B) d^T (xI-B)^{-1}d \\
= \ & \prod_{k=1}^r (x-b_k) + \delta \sum_{k=1}^r {d_k^2 \prod_{i=1, i \neq k}^r (x-b_i)}.
\end{split}
\end{equation}
Since the monic polynomial $\det(xI-A)$ and $u(x)$ agree for $x = b_1,\ldots,b_r$, it
follows that they are equal.
\end{proof}

\begin{rem}
There are simpler, tridiagonal matrices which can replace
the matrix $A$ (see \cite{Schmeisser}); however, square roots are still necessary to
construct them.
\end{rem}

The following corollary allows us to form a real symmetric matrix with
characteristic polynomial equal to the minimal polynomial for $\theta$
over $\mathbb Q$.

\begin{cor}\label{mainrealcor}
If $u(x) \in \mathbb Q[x]$ is monic of degree $r$ and has $r$ distinct real roots,
then there are positive rational numbers $l_1,\ldots,l_r$ and a symmetric matrix $A$ with entries in 
$\mathbb Q(\sqrt{l_1},\ldots,\sqrt{l_r})$ such that the eigenvalues of $A$ are the roots of $u(x)$.
\end{cor}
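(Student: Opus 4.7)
The plan is to apply Fiedler's construction (Theorem \ref{symcomp}) with carefully chosen rational data $b_1,\ldots,b_r$ and $\delta$, and then read off the $l_k$ as the resulting $d_k^2$. Specifically, let $\alpha_1<\alpha_2<\cdots<\alpha_r$ denote the (real, distinct) roots of $u(x)$. I would choose rationals $b_k$ interleaving with these roots:
\[
b_k \in (\alpha_k,\alpha_{k+1})\cap\mathbb Q \quad (1\le k\le r-1),\qquad b_r\in(\alpha_r,\infty)\cap\mathbb Q.
\]
This is possible since the rationals are dense in $\mathbb R$, and it automatically yields $b_1<b_2<\cdots<b_r$ with $u(b_k)\neq 0$.

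The heart of the argument is then a sign count. With $v(x)=\prod_{i=1}^r(x-b_i)$, one has
\[
v'(b_k)=\prod_{i\neq k}(b_k-b_i),
\]
which is a product with $k-1$ positive and $r-k$ negative factors, hence has sign $(-1)^{r-k}$. On the other hand, $u(b_k)=\prod_i(b_k-\alpha_i)$ is a product with $k$ positive and $r-k$ negative factors (using $\alpha_k<b_k<\alpha_{k+1}$, with the convention $\alpha_{r+1}=+\infty$), so it also has sign $(-1)^{r-k}$. Setting $\delta=1$ and
\[
l_k \;:=\; \frac{u(b_k)}{v'(b_k)} \;\in\; \mathbb Q_{>0},
\]
we may therefore take $d_k:=\sqrt{l_k}\in\mathbb R$. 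Then $\delta v'(b_k)d_k^2-u(b_k)=0$ by construction, so Theorem \ref{symcomp} applies to $B=\mathrm{diag}(b_1,\ldots,b_r)$ and $d=[d_1,\ldots,d_r]^T$: the symmetric matrix $A=B-\delta dd^T$ has characteristic polynomial $u(x)$, and its roots (= eigenvalues) are exactly $\alpha_1,\ldots,\alpha_r$ as required.

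Finally I would verify the field of entries. The diagonal entries of $A$ are $b_k-l_k\in\mathbb Q$, while the off-diagonal entries are $-d_id_j=-\sqrt{l_i}\sqrt{l_j}\in\mathbb Q(\sqrt{l_1},\ldots,\sqrt{l_r})$, completing the proof. The only potential obstacle is ensuring the positivity of the $d_k^2$ simultaneously from a single $\delta$; this is precisely what the interleaving choice of $b_k$ resolves by forcing $u(b_k)$ and $v'(b_k)$ to share the sign $(-1)^{r-k}$ for every $k$.
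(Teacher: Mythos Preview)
Your proof is correct and follows essentially the same approach as the paper: choose rational interlacing points $b_k$, apply Fiedler's theorem, and take $l_k=\delta u(b_k)/v'(b_k)$. You are in fact more explicit than the paper, which simply asserts that a suitable $\delta\in\{-1,1\}$ exists; your sign count verifies this and shows that with the specific choice $b_r>\alpha_r$ one may take $\delta=1$.
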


\begin{proof}
Let $b_1,\ldots,b_{r-1}$ be rational numbers such that exactly one $b_i$ is (strictly) between
consecutive roots of $u(x)$, and let $b_r$ be a rational number either 
smaller than the least root of $u(x)$ or
greater than the largest root of $u(x)$.  Also, set $\delta \in \{-1,1\}$
such that $l_k = \delta u(b_k)/v'(b_k)$ is positive for each $k$.  The corollary now
follows from Theorem \ref{symcomp} by setting $d_k = \sqrt{l_k}$ for each $k$.
\end{proof}

\begin{ex}\label{cubeexA}
Consider the polynomial $u(x) = x^3-3x+1$ from Example \ref{cubeex}.  Choosing
$(b_1,b_2,b_3) = (0,1,2)$ and $\delta = 1$, we have $d = [\sqrt{2}/2,1,\sqrt{6}/2]^T$ and
\[ A = \left[\begin{array}{ccc}-1/2 & -\sqrt{2}/2 & -\sqrt{3}/2 \\-\sqrt{2}/2 & 0 & -\sqrt{6}/2 \\-\sqrt{3}/2 & -\sqrt{6}/2 & 1/2\end{array}\right].\]
One can easily verify that the characteristic polynomial of $A$ is $u(x)$.\qed
\end{ex}

Combining Corollary \ref{mainrealcor} and the construction found in Section \ref{vandfact},
we have proved the following theorem.

\begin{thm}\label{almosttherethm}
Let $K$ be a totally real Galois extension of $\mathbb Q$ and set $r = [K: \mathbb Q]$.
Then for any $p \in K [x_1,\ldots,x_n]$, there are positive integers $l_1,\ldots,l_r$
such that 
\[ \sum_{\sigma \in G} (\sigma p)^2 = q^TCC^Tq,\]
in which $q$ is a vector of polynomials in $\mathbb Q [x_1,\ldots,x_n]$ and
$C$ is an $r \times {r+1 \choose 2}$ matrix with entries in 
$F = \mathbb Q(\sqrt{l_1},\ldots,\sqrt{l_r},\sqrt{2})$.
\end{thm}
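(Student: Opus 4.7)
The plan is to assemble Theorem \ref{almosttherethm} directly from the two ingredients just developed: the parametrization of $\sum_{\sigma\in G}(\sigma p)^2$ as a Vandermonde-type quadratic form in the coefficients of $p$ (equation (\ref{vandtheta})), and the rewriting of $V_r V_r^T$ as $CC^T$ from Section \ref{vandfact}, applied to a symmetric matrix $A$ supplied by Corollary \ref{mainrealcor}. The only genuinely moving parts are checking that the base field housing $A$, and hence $C$, is what is claimed.

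Concretely, I would first use that $\{1,\theta,\ldots,\theta^{r-1}\}$ is a $\mathbb Q$-basis of $K=\mathbb Q(\theta)$ to write
\[ p \;=\; \sum_{i=0}^{r-1} q_i\,\theta^i, \qquad q_i \in \mathbb Q[x_1,\ldots,x_n],\]
and set $q=(q_0,\ldots,q_{r-1})^T$. Then equation (\ref{vandtheta}) already gives
$\sum_{\sigma\in G}(\sigma p)^2 = q^T V_r V_r^T q$, with $V_r$ the Vandermonde matrix built from the conjugates $\sigma_1\theta,\ldots,\sigma_r\theta$. All of these conjugates are real because $K$ is totally real, and they are distinct because the Galois extension $K/\mathbb Q$ is separable, so the minimal polynomial $u(x)\in\mathbb Q[x]$ of $\theta$ satisfies the hypotheses of Corollary \ref{mainrealcor}. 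That corollary supplies positive rationals $l_1',\ldots,l_r'$ and a symmetric matrix $A$ with entries in $\mathbb Q(\sqrt{l_1'},\ldots,\sqrt{l_r'})$ whose eigenvalues are precisely $\sigma_1\theta,\ldots,\sigma_r\theta$. Writing each $l_k' = a_k/b_k^2$ with $a_k,b_k\in\mathbb Z_{>0}$, we have $\sqrt{l_k'} = \sqrt{a_k}/b_k$, so with $l_k := a_k$ we may harmlessly replace the $l_k'$ by positive integers $l_k$ with $\mathbb Q(\sqrt{l_k'}) = \mathbb Q(\sqrt{l_k})$.

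Next, I would feed this matrix $A$ into the construction of Section \ref{vandfact}. Expanding each power $A^m$ for $m=0,1,\ldots,r-1$ in the orthonormal (with respect to the trace inner product) basis
\[ \{E_{ii}\colon 1\le i\le r\}\;\cup\;\bigl\{(E_{ij}+E_{ji})/\sqrt{2}\colon 1\le i<j\le r\bigr\}\]
and stacking the resulting coefficient vectors as rows yields an $r\times \binom{r+1}{2}$ matrix $C$. Each entry of $C$ is either an entry of some $A^m$ (hence lies in $\mathbb Q(\sqrt{l_1},\ldots,\sqrt{l_r})$) or $\sqrt{2}$ times such an entry, so $C$ has entries in $F=\mathbb Q(\sqrt{l_1},\ldots,\sqrt{l_r},\sqrt{2})$. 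Identity (\ref{matrixfactor}) then gives $CC^T = [\operatorname{tr}(A^{i+j-2})]_{i,j=1}^r = V_r V_r^T$, since the $m$th Newton power sum of the eigenvalues of $A$ equals $\operatorname{tr}(A^m)$. Substituting into $q^T V_r V_r^T q$ produces the desired identity $\sum_{\sigma\in G}(\sigma p)^2 = q^T CC^T q$.

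The one place that requires care, and the mildest ``obstacle'' here, is tracking which field the entries of $C$ land in: the $\sqrt{2}$ arises exclusively from the normalization of the off-diagonal basis matrices, while the $\sqrt{l_k}$ factors come only from the rank-one perturbation $\delta d d^T$ in Theorem \ref{symcomp}. Once one writes out $A^m$ explicitly and records coefficients against the above basis, the containment of the entries of $C$ in $F$ is immediate, completing the proof.
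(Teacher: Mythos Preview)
Your proposal is correct and follows exactly the paper's approach: the theorem is stated in the paper immediately after Corollary \ref{mainrealcor} with the one-line justification ``Combining Corollary \ref{mainrealcor} and the construction found in Section \ref{vandfact},'' and you have simply spelled out those details. Your observation that the positive rationals $l_k'$ from Corollary \ref{mainrealcor} may be replaced by positive integers $l_k$ without changing $\mathbb Q(\sqrt{l_k'})$ is a detail the paper omits but which is needed for the statement as written.
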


To illustrate the computations performed in the proof of Theorem \ref{almosttherethm},
we present the following.

\begin{ex}
We continue with Example \ref{cubeexA}.  Let $\alpha \in \mathbb R$ be the 
root of $u(x)$ with $\alpha \in (1,2)$.  Then, setting $\beta = 2-\alpha - \alpha^2$
and $\gamma = \alpha^2-2$, we have $u(x) = (x-\alpha)(x-\beta)(x-\gamma)$.
The Galois group of $K = \mathbb Q(\alpha)$ is cyclic and is generated by the 
element $\sigma \in G$ such that $\sigma(\alpha) = \beta$.  If we let 
$\textup{\textbf{v}} = [x^3+2xy^2 -1,-xy^2, y-xy^2]^T$, then the
factorization obtained by 
Theorem \ref{almosttherethm} is given by
\[ f = \textup{\textbf{v}}^T
\left[\begin{array}{ccc}1 & -1/2 & 3/2 \\1 & 0 & 2 \\1 & 1/2 & 5/2 \\0 & -1 & 2 \\0 & -\sqrt{6}/2 & \sqrt{6}/2 \\0 & -\sqrt{3} & 0\end{array}\right]^T
\left[\begin{array}{ccc}1 & -1/2 & 3/2 \\1 & 0 & 2 \\1 & 1/2 & 5/2 \\0 & -1 & 2 \\0 & -\sqrt{6}/2 & \sqrt{6}/2 \\0 & -\sqrt{3} & 0\end{array}\right]
  \textup{\textbf{v}}.\] 
One can check that this factorization already produces the rational sum of squares representation
we encountered  in Example \ref{cubeex}.\qed
\end{ex}

We note that when $K$ is an arbitrary number field, Galois over $\mathbb Q$, 
our approach still produces a result similar in spirit to Theorem \ref{almosttherethm}.
The only difference is that we must allow negative integers $l_k$ 
in the statement.

\begin{thm}\label{almosttherethm2}
Let $K$ be a finite Galois extension of $\mathbb Q$ and set $r = [K: \mathbb Q]$.
Then for any $p \in K[x_1,\ldots,x_n]$, there are integers $l_1,\ldots,l_r$
such that 
\[ \sum_{\sigma \in G} (\sigma p)^2 = q^TCC^Tq,\]
in which $q$ is a vector of polynomials in $\mathbb Q [x_1,\ldots,x_n]$ and
$C$ is an $r \times {r+1 \choose 2}$ matrix with entries in 
$F = \mathbb Q(\sqrt{l_1},\ldots,\sqrt{l_r},\sqrt{2})$.
\end{thm}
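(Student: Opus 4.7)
The plan is to parallel the proof of Theorem \ref{almosttherethm} step by step, observing that the totally real hypothesis entered in exactly one place, namely Corollary \ref{mainrealcor}, where it guaranteed that the numbers $l_k = \delta u(b_k)/v'(b_k)$ could be chosen positive (by interlacing the $b_k$ with the real roots of $u(x)$ and selecting $\delta$ appropriately). Without totally realness, interlacing is unavailable, so we simply allow the $l_k$ to be negative and $\sqrt{l_k}$ to be imaginary; the rest of the argument is then formal.

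First I would extend Corollary \ref{mainrealcor} to arbitrary $u(x)$. Let $u(x) \in \mathbb Q[x]$ be the minimal polynomial of $\theta$, of degree $r$. Choose any distinct rational numbers $b_1,\ldots,b_r$ with $u(b_k) \neq 0$, which is always possible since $u$ has only finitely many roots, and set $l_k = \delta u(b_k)/v'(b_k) \in \mathbb Q$ with $v(x) = \prod_k (x - b_k)$ for either $\delta \in \{\pm 1\}$. Then Theorem \ref{symcomp} applied with $d_k = \sqrt{l_k}$ (taken in $\overline{\mathbb Q}$) produces a symmetric matrix $A = \mathrm{diag}(b_1,\ldots,b_r) - \delta dd^T$ with characteristic polynomial $u(x)$ and entries in $\mathbb Q(\sqrt{l_1},\ldots,\sqrt{l_r})$. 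Since $\mathbb Q(\sqrt{a/b}) = \mathbb Q(\sqrt{ab})$ for integers $a$, $b$, a relabeling allows us to assume the $l_k$ in the statement are (possibly negative) integers without changing the field $F$.

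Next I would invoke the Vandermonde factorization of Section \ref{vandfact}. The identity $V_r V_r^T = CC^T$ of equation (\ref{matrixfactor}) is a formal identity valid over any field of characteristic different from $2$ and makes no appeal to reality or positivity of the eigenvalues of $A$. Applying it to the symmetric matrix $A$ just constructed yields an $r \times \binom{r+1}{2}$ matrix $C$ over $F = \mathbb Q(\sqrt{l_1},\ldots,\sqrt{l_r},\sqrt{2})$ for which $V_r V_r^T = CC^T$, where $V_r$ is the Vandermonde matrix in the roots $\sigma_1\theta,\ldots,\sigma_r\theta$ of $u(x)$.

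Finally, as in the proof of Theorem \ref{almosttherethm}, I would expand $p = \sum_{i=0}^{r-1} q_i \theta^i$ with $q_i \in \mathbb Q[x_1,\ldots,x_n]$ and set $q = [q_0,\ldots,q_{r-1}]^T$; the computation (\ref{vandtheta}) then gives
\[
\sum_{\sigma \in G}(\sigma p)^2 \;=\; q^T V_r V_r^T q \;=\; q^T CC^T q,
\]
which is the factorization asserted. There is no real obstacle beyond the routine bookkeeping needed to replace rational $l_k$ by integers; the crux is simply that Fiedler's construction and the Vandermonde identity $V_rV_r^T = CC^T$ are purely algebraic and care nothing about whether the eigenvalues of $A$ are real.
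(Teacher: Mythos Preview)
Your proposal is correct and is precisely the argument the paper has in mind: the paper does not give a separate proof of Theorem \ref{almosttherethm2} but simply remarks that the approach for Theorem \ref{almosttherethm} goes through verbatim once one drops the interlacing choice of the $b_k$ in Corollary \ref{mainrealcor} and allows the resulting $l_k$ to be negative integers. Your write-up fills in exactly those details, including the bookkeeping step replacing rational $l_k$ by integers via $\mathbb Q(\sqrt{a/b}) = \mathbb Q(\sqrt{ab})$.
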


The following corollary is the closest we come to answering Sturmfels' question in 
the general case.  It follows from applying Theorem \ref{almosttherethm2}
in the same way that Theorem \ref{almosttherethm} will be used below to prove
Theorem \ref{sqrtsfieldext}.

\begin{cor}
Let $K$ be a finite extension of $\mathbb Q$.  
 If $f \in \mathbb Q[x_1,\ldots,x_n]$ is a sum of squares over 
$K[x_1,\ldots,x_n]$, then it is a difference of two sums of squares over $\mathbb Q[x_1,\ldots,x_n]$.
\end{cor}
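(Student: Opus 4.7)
My plan is to mirror the proof of Theorem \ref{mainthm} as closely as possible, substituting Theorem \ref{almosttherethm2} for Theorem \ref{almosttherethm} and accepting that the auxiliary field $F = \mathbb Q(\sqrt{l_1},\ldots,\sqrt{l_r},\sqrt{2})$ now has some $l_k < 0$. These negative $l_k$ are exactly what will produce the ``difference'' in the conclusion.

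First, I would reduce to the Galois case by replacing $K$ with its Galois closure, which preserves the sum-of-squares hypothesis. Then, if $f = \sum_{i=1}^m p_i^2$ with $p_i \in K[x_1,\ldots,x_n]$, the averaging argument from the proof of Theorem \ref{mainthm} gives $|G| f = \sum_i \sum_{\sigma \in G} (\sigma p_i)^2$, and Theorem \ref{almosttherethm2} expresses each inner trace form as $q_i^T C_i C_i^T q_i$. Expanding the right-hand side columnwise realizes $|G|f$ as an honest sum of squares of polynomials in $F[x_1,\ldots,x_n]$.

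Next I would descend from $F$ to $\mathbb Q$ by averaging over the elementary abelian $2$-group $H := \text{Gal}(F/\mathbb Q)$. Writing any square $\beta^2$ in the natural $\mathbb Q$-basis $\{\gamma_S = \prod_{k \in S} \sqrt{l_k}\}$ of $F$ (indexed by subsets $S$ of the adjoined generators, including $\sqrt{2}$) and using the orthogonality of the characters of $H$, the cross-terms vanish and
\[
|H|^{-1} \sum_{\tau \in H} (\tau \beta)^2 \;=\; \sum_S \Bigl(\prod_{k \in S} l_k\Bigr)\, c_S^2
\]
for suitable $c_S \in \mathbb Q[x_1,\ldots,x_n]$. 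Since $\tau f = f$, applying this termwise yields an identity
\[
|G| f \;=\; \sum_{i,S} \Bigl(\prod_{k \in S} l_k\Bigr)\, c_{i,S}^2
\]
entirely inside $\mathbb Q[x_1,\ldots,x_n]$.

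Finally, I would split the right-hand side according to the sign of the rational coefficient $\prod_{k \in S} l_k$. Lagrange's four-square theorem, applied to positive rationals, absorbs each positive coefficient (and the absolute value of each negative one) into the accompanying square, exhibiting $|G|f$ as a difference $A - B$ of two rational sums of squares; one further application of Lagrange to $1/|G|$ converts this into the desired representation of $f$ itself. The main obstacle, and the reason the conclusion weakens from a sum of squares to a difference of two such sums, is precisely the presence of products $\prod_{k \in S} l_k < 0$: in the totally real setting of Theorem \ref{mainthm} all $l_k$ could be chosen positive by Corollary \ref{mainrealcor}, whereas for a general number field Theorem \ref{almosttherethm2} forces negative integers $l_k$ upon us, and no regrouping of the identity above can eliminate the resulting minus signs.
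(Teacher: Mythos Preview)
Your proof is correct and follows the paper's route: pass to the Galois closure, average over $G$ to reduce to trace forms, invoke Theorem~\ref{almosttherethm2} to land in $F=\mathbb Q(\sqrt{l_1},\dots,\sqrt{l_r},\sqrt{2})$, and then descend to $\mathbb Q$, with the negative $l_k$ supplying the minus signs. The paper performs this last descent one quadratic subextension at a time via $(a+b\sqrt{l})^2+(a-b\sqrt{l})^2=2a^2+2lb^2$ (exactly as in the proof of Theorem~\ref{sqrtsfieldext}), whereas you do it in one stroke by character orthogonality; just be aware that your displayed identity presumes the $\gamma_S$ form a genuine $\mathbb Q$-basis of $F$, i.e.\ that the adjoined square roots are multiplicatively independent over $\mathbb Q^{\times}$, so either prune redundant generators first or adopt the iterative descent, which handles such dependencies automatically.
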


\begin{ex}
Consider the degree $2$ field extension $K = \mathbb Q(i \sqrt{2})$,
which is the splitting field of $u(x) = x^2+2$. One can check that setting $(b_1,b_2) = (0,1)$, 
$\delta = -1$, and $d = [\sqrt{2},i\sqrt{3}]^T$ in Theorem \ref{symcomp} produces the symmetric matrix
\[ A = \left[\begin{array}{ccc} 2 & i\sqrt{6}  \\ i \sqrt{6} & -2 \end{array}\right].\]
It follows that the $2 \times 2$ Vandermonde matrix $V_2$ as in (\ref{vandtheta}) 
satisfies $V_2 V_2^T = CC^T$, in which 
\[C = \left[\begin{array}{ccc}1 & 1 & 0 \\2 & -2 & 2i\sqrt{3}\end{array}\right].\]
This calculation expresses the polynomial $f = (x+i\sqrt{2} y)^2 + (x-i\sqrt{2} y)^2$ 
as the difference
\[f = (x+2y)^2 + (x-2y)^2 - 12y^2.\]
\qed
\end{ex}

\section{Proof of Theorem \ref{weakmainthm}}\label{proof}

In this section, we complete the proof of our main theorem.  The results
so far show that if $f$ is a sum of $m$ squares in $K[x_1,\ldots,x_n]$ for 
a totally real field $K$, Galois over $\mathbb Q$, then $f$ is a sum of 
$m \cdot {[K:\mathbb Q] +1 \choose 2}$ squares in $L[x_1,\ldots,x_n]$, 
where $L = \mathbb Q(\sqrt{l_1},\ldots,\sqrt{l_r},\sqrt{2})$ 
for some positive integers $l_k$.  The proof of Theorem \ref{weakmainthm}
is thus complete if we can show the following.

\begin{thm}\label{sqrtsfieldext}
Let $l_1,\ldots,l_{r+1}$ be positive integers and set $L = \mathbb Q(\sqrt{l_1},\ldots,\sqrt{l_{r+1}})$.
If $f \in \mathbb Q [x_1,\ldots,x_n]$ is a sum of $s$ squares in  $L[x_1,\ldots,x_n]$,
then $f$ is a rational sum of at most $s \cdot 2^{r+1}$ squares in $\mathbb Q [x_1,\ldots,x_n]$.
\end{thm}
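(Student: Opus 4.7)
The plan is to iteratively \emph{peel off} the square roots $\sqrt{l_{r+1}},\sqrt{l_r},\dots,\sqrt{l_1}$ from the given representation of $f$, descending the tower $L\supset \mathbb{Q}(\sqrt{l_1},\dots,\sqrt{l_r})\supset\cdots\supset\mathbb{Q}$ one quadratic extension at a time. At each step I will use only that $f\in\mathbb{Q}$ lies in the next smaller subfield to force a single global cross-term vanishing, and this doubles the number of sum-of-squares ``blocks'' being tracked while keeping all coefficients positive rational. After $r+1$ peelings I land in $\mathbb{Q}[x_1,\dots,x_n]$ with $2^{r+1}$ blocks of size at most $s$, giving a rational sum of at most $s\cdot 2^{r+1}$ squares. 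Without loss of generality I assume $[L:\mathbb{Q}]=2^{r+1}$, by discarding any $\sqrt{l_i}$ already lying in the field generated by the others; this only improves the bound.

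Set $M_k:=\mathbb{Q}(\sqrt{l_1},\dots,\sqrt{l_{r+1-k}})$, so $M_0=L$ and $M_{r+1}=\mathbb{Q}$. I will maintain the invariant that, after $k$ peelings,
\[
f \;=\; \sum_{S\subseteq\{r+2-k,\dots,r+1\}}\Bigl(\prod_{j\in S}l_j\Bigr)\,g_S,
\]
where each $g_S$ is a sum of at most $s$ squares in $M_k[x_1,\dots,x_n]$ and all coefficients $\prod_{j\in S}l_j$ are positive rationals. The base case $k=0$ is immediate with $g_\emptyset=\sum_{i=1}^{s}p_i^2$.

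For the inductive peeling step, set $l:=l_{r+1-k}$ and uniquely decompose each element $a\in M_k$ appearing as a summand of some $g_S$ as $a=b+c\sqrt{l}$ with $b,c\in M_{k+1}$. Expanding $a^2=b^2+lc^2+2bc\sqrt{l}$, summing inside each $g_S$, and then summing over $S$ weighted by $\prod_{j\in S}l_j$ yields an expression for $f$ with a $\sqrt{l}$-component equal to $2\sum_S(\prod_{j\in S}l_j)\sum_i b_i^S c_i^S$. Since $f\in\mathbb{Q}\subseteq M_{k+1}$ and $\sqrt{l}\notin M_{k+1}$, this $\sqrt{l}$-component must vanish as a whole, and the remaining $M_{k+1}$-part accounts for $f$. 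Re-indexing by subsets $S'\subseteq\{r+1-k,\dots,r+1\}$: each old $S$ spawns a block $\sum_i(b_i^S)^2$ with coefficient $\prod_{j\in S}l_j$ (indexed by $S'=S$) and a block $\sum_i(c_i^S)^2$ with coefficient $l\cdot\prod_{j\in S}l_j$ (indexed by $S'=S\cup\{r+1-k\}$). Both new blocks are sums of at most $s$ squares in $M_{k+1}[x_1,\dots,x_n]$, so the invariant holds at level $k+1$ with the block count doubled.

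After $r+1$ iterations we reach $M_{r+1}=\mathbb{Q}$, expressing $f$ as a positive-rational-coefficient combination of $2^{r+1}$ sums of at most $s$ rational squares, i.e., a rational sum of at most $s\cdot 2^{r+1}$ squares. The key conceptual point (and the only real obstacle) is recognizing that the cross-term vanishing at each peeling is a \emph{single global} identity on the $\sqrt{l}$-coefficient of $f$, not a per-block condition; one cannot hope to arrange the intermediate blocks $g_S$ to lie individually in $\mathbb{Q}$, and any naive $r\to(r+1)$ induction that attempts this is bound to fail.
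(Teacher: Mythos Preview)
Your proof is correct and follows essentially the same route as the paper: peel off one $\sqrt{l_j}$ at a time, use that $f$ lies in the smaller field to kill the $\sqrt{l}$-component, and double the number of squares at each of the $r+1$ steps. The paper packages this as a one-step lemma for an arbitrary quadratic extension $F(\sqrt{l})/F$ via Galois averaging --- writing $f=\tfrac{1}{2}\sum_i\bigl(p_i^2+(\sigma p_i)^2\bigr)$ makes the cross terms cancel \emph{per summand} (since $(a+b\sqrt{l})^2+(a-b\sqrt{l})^2=2a^2+2lb^2$) and then simply iterates, so your closing caveat that ``any naive $r\to(r+1)$ induction \ldots\ is bound to fail'' is off the mark: the naive tower descent is exactly what the paper does, with no need for your global block-tracking invariant.
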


\begin{proof}
Let $l$ be a positive integer and let $L = F(\sqrt{l})$ be a quadratic extension of 
a field $F$ of characteristic $0$. We shall prove:  If 
$f \in  F [x_1,\ldots,x_n]$ is a rational sum of 
$s$ squares in  $L [x_1,\ldots,x_n]$,
then $f$ is a rational sum of at most $2s$ squares in 
$F[x_1,\ldots,x_n]$.  The theorem then follows by repeated application of this fact.

If $L = F$, then there is nothing to prove.  Otherwise, let $\sigma \in \text{Gal}(L/F)$ be
such that $\sigma(\sqrt{l}) = -\sqrt{l}$, and let $f \in  F [x_1,\ldots,x_n]$ be 
a sum of $s$ squares in $L [x_1,\ldots,x_n]$:
\begin{equation*}\label{sumsqrL}
\begin{split}
f = \ &  \sum_{i=1}^s p_i^2  =  \frac{1}{2} \sum_{i=1}^s \left(p_i^2 + (\sigma p_i)^2 \right).
\end{split}
\end{equation*}
It therefore suffices to prove that for fixed
$p \in  L[x_1,\ldots,x_n]$, the element $p^2 + (\sigma p)^2$ is 
a rational sum of $2$ squares.  Finally, writing $p = a + b \sqrt{l}$ for 
$a,b \in  F[x_1,\ldots,x_n]$, we have that  \[(a+b\sqrt{l})^2 + (a-b\sqrt{l})^2 = 2a^2 + 2lb^2.\]
This completes the proof.
\end{proof}


\end{document}